\newcommand{\red}{{\text{\rm red}}}
\newcommand{\sA}{\mathcal{A}}
\newcommand{\sL}{\mathcal{L}}
\newcommand{\sO}{\mathcal{O}}
\newcommand{\sG}{\mathcal{G}}
\newcommand{\bL}{\mathbb{L}}
\newcommand{\bN}{\mathbb{N}}
\newcommand{\bZ}{\mathbb{Z}}
\newcommand{\bQ}{\mathbb{Q}}
\newcommand{\bA}{\mathbb{A}}
\newcommand{\fm}{\mathfrak{m}}
\newcommand{\fl}{\mathfrak{l}}
\newcommand{\Spec}{\operatorname{Spec}}
\newcommand\Var[1]{\mathbf{Var}_{#1}}
\newcommand{\into}{\hookrightarrow}
\newcommand{\onto}{\twoheadrightarrow}
\newcommand{\colim}{\varinjlim}
\renewcommand\dim[1]{\mbox{dim}{#1}}
\newcommand\spec[1]{\operatorname{Spec}(#1)}
\newcommand\grot[1]{{\mathbf {Gr}(#1)}}
\newcommand{\ord}{{\operatorname{ord}}}
\newtheorem{theorem}{Theorem}[section]
\newtheorem{lemma}[theorem]{Lemma}
\theoremstyle{definition}
\newtheorem{definition}[theorem]{Definition}
\newtheorem{example}[theorem]{Example}
\newtheorem{proposition}[theorem]{Proposition}
\newtheorem{conjecture}[theorem]{Conjecture}
\newtheorem{assumption}[theorem]{Assumption}
\newtheorem{corollary}[theorem]{Corollary}
\theoremstyle{remark}
\newtheorem{remark}[theorem]{Remark}
\numberwithin{equation}{section}
\begin{document}

\title{The Auto Igusa-Zeta function of a plane curve singularity is rational.}


\author{Andrew R. Stout}
\address{Andrew R. Stout\\
 Dept. of Math. \& Comp. Sci. \\
Bronx Community College (CP 315)\\
2155 University Avenue\\
Bronx, NY 10453}
\email{astout@gradcenter.cuny.edu}

\subjclass[2010]{Primary 14H20, Secondary 14H50, 14E18.}

\date{}

\dedicatory{}

\commby{Lev A. Borisov}

\begin{abstract}
We show that the auto Igusa-zeta function $\zeta_{C,p}(t)$ of a plane curve $C$ over an algebraically closed field $k$ is rational away from points $p\in C$ of wild ramification--i.e., it is of the form $f(t)/g(t)$ where $f(t)\in \grot{\Var{k}}[\bL^{-1},t]$, where $\grot{\Var{k}}$ is Grothendieck ring of varieties, and $g(t)=\prod_{i=1}^{n}(1-\bL^{a_i}t^{b_i})$ with $a_i\in\bZ$ and $b_i\in\bN\setminus\{0\}$, where $\bL:=[\bA_{k}^1]$ is the Leftshetz motive. As a consequence, we give a new characterization for a curve $C$ on a smooth surface $S$ to be smooth at a point $p$ on $C$ when the ground field is algebraically closed and of characteristic zero. 
\end{abstract}

\maketitle

\section{Introduction}

The auto Igusa-zeta function, which was originally introduced by Schoutens in \cite{Sch2}, is a motivic generating series associated to a germ $(X,p)$ where $p$ is a point on a variety $X$. A motivic generating series is a formal power series $\sum_{i=0}^n a_n t^n$ where the coefficients $a_n$ are elements of the (localized) Grothendieck ring of varieties $\grot{\Var{k}}[\bL^{-1}]$. In the case of the auto Igusa-zeta function, we form the coefficients $a_n$ in the following way. 

Let $X$ be a separated scheme of finite type over a field $k$. For each $n\geq 1$, we let $J_p^nX:=\Spec(\sO_{X,p}/\fm_p^n)$, and we call this connected, zero-dimensional scheme, the  $n${\it th}-{\it jet of} $X$ {\it at} $p$. Since $J_p^nX$ is finite over the residue field $\kappa(p)$ of $X$ at $p$, the functor $\mbox{Hom}_{\kappa(p)}(J_p^nX\times_{\kappa(p)}-, Y)$, which parameterizes all $J_p^nX$-points of $Y$ exists in the category of $\kappa(p)$-schemes.  We denote this Hilbert scheme by $\nabla_{J_p^nX}Y$, and we call it the {\it (generalized) arc space of} $Y$ {\it along} $J_p^nX$. Moreover, $\nabla_{J_p^nX}Y$ is separated and of finite type over $\kappa(p)$ provided that $Y$ is also--i.e., we may consider its class $[\nabla_{J_p^nX}Y]$ in $\grot{\Var{\kappa(p)}}[\bL^{-1}]$. In the case that $Y$ is itself the $n$-th jet of $X$ at $p$, we obtain the Hilbert scheme $\nabla_{J_p^nX}J_p^nX$, 
which parameterizes all endomorphisms of $J_p^nX$. We call this space the {\it auto-arc space of} $X$ {\it at level} $n$, and we use the notation $\sA_n(X,p)$ to denote $\nabla_{J_p^nX}J_p^nX$ in this paper. The coefficients $a_n$ defining the auto Igusa-zeta function as a motivic generating series are given by $$a_n = [\sA_{n+1}(X,p)]\bL^{-d\cdot\ell(J_p^{n+1}X)},  \ \ \mbox{\ for all } n\geq 0, $$ where $d = \mbox{krull-dim}(\sO_{X,p})$ and where $\ell(J_p^{n+1}X)$ is the length of the Artinian ring $\sO_{X,p}/\fm_p^{n+1}$ (i.e., the dimension of $\sO_{X,p}/\fm_p^{n+1}$ as a vector space over $\kappa(p)$). 

\begin{definition}\label{one}
Let $X$ be a separated scheme of finite type over a field $k$ and let $p$ be a point on $X$ with residue field $\kappa(p)$. The {\it auto Igusa-zeta function} $\zeta_{X,p}(t)$ {\it of the germ} $(X,p)$ is the element of $\grot{\Var{\kappa(p)}}[\bL^{-1}][[t]]$ defined by
\begin{equation}\zeta_{X,p}(t) :=  \sum_{i=0}^{\infty}[\sA_{n+1}(X,p)]\bL^{-d\cdot\ell(J_p^{n+1}X)}t^n ,\end{equation}
where $d = \mbox{krull-dim}(\sO_{X,p})$.
Moreover, if $Y$ is a separated scheme of finite type over $\kappa(p)$ with pure dimension $d$, the {\it (generalized) motivic Igusa-zeta function} $\Theta_{Y,J_p^{\infty}X}(t)$ {\it of} $Y$ {\it with respect to the infinite jet} $J_p^{\infty}X$ is the element of $\grot{\Var{\kappa(p)}}[\bL^{-1}][[t]]$ defined by
\begin{equation}\Theta_{Y,J_p^{\infty}Y}(t):= \sum_{i=0}^{\infty}[\nabla_{J_p^{n+1}X}Y]\bL^{-d\cdot\ell(J_p^{n+1}X)}t^n,\end{equation}
where $J_p^{\infty}X$ denotes the formal scheme formed by the filtered colimit $\varinjlim_n J_p^nX$.
\end{definition}

The original motivation for studying auto-arc spaces $\sA_n(X,p)$ and the auto Igusa-zeta function is to understand the degree to which one may generalize some notions of motivic integration from varieties to schemes. Moreover, it is also of interest to investigate whether or not results in motivic integration have counterparts when we replace {\it linear arc spaces}, defined as $\nabla_{J_p^n\bA_{k}^1}Y$, with more general types of arc spaces such as $\nabla_{J_p^nX}Y$. Given the importance of linear jets and linear arcs space, we let $\fl_n:=J_p^n\bA_{k}^1$ and let $\fl := \colim_n \fl_n$.

\section{Some first results and conjectures.}
In \S 4 of \cite{Sch2}, Schoutens showed that if $X=\bA_{k}^{d}$ where $k$ is an algebraically closed field and $p\in X$ is a closed point, then $\zeta_{X,p}(t) = \bL^{-d}(1-t)^{-1}.$ We reprove his statement here.

\begin{example}\label{linauto}
Consider the case where $X$ is $\bA_{\kappa}^{1}$ and let $p$ be any point of $\bA_{k}^{1}$. 
We will calculate the reduction of the auto-arc space $\sA_n(\bA_{\kappa}^1,p)$. To do this, we let $\alpha:= \sum_{i=0}^{n-1} a_it^i \in \kappa(p)[t]/(t^n)$ with $a_i \in \kappa(p)$ and set $\alpha^n=0$ as an element of $\kappa(p)[t]/(t^n)$. Now, $0=\alpha^n = (a_0 + t\cdot\beta)^n$ where $\beta \in \kappa(p)[t]/(t^n)$, which implies that $a_0^n=0$ and so $a_0= 0$ in the reduction. Therefore, the reduced auto-arc space of $\bA_{k}^{1}$ at $p$ is defined by the equations $a_0=0$ and $(t\beta)^n=0$. However, the second equation is trivially satisfied -- i.e., $(t\beta)^n = t^n\cdot \beta^n= 0\cdot\beta^n = 0$ for any $\beta \in \kappa(p)[t]/(t^n)$. Equivalently, we have the following isomorphism
\begin{equation*}
\sA_n(\bA_{k}^1,p)^{\red}\cong \spec{\kappa(p)[a_0,\ldots,a_{n-1}]/(a_0)}.
\end{equation*}
for all $n\in\bN$. Thus, for all $n\in\bN$, we have 
\begin{equation*}
\sA_n(\bA_{k}^1,p)^{\red}\cong \bA_{\kappa(p)}^{n-1}.
\end{equation*}
\end{example}

\begin{lemma}\label{lem}
Let $p$ be a point of $\bA_{k}^{d}$. Then, for all $n\in\bN$,
\begin{equation*}
\sA_n(\bA_{k}^{d},p)^{\red} \cong \bA_{\kappa(p)}^{r_n}
\end{equation*}
where $r_n=d\cdot(\ell(J_p^n\bA_{k}^{d})-1)$.
\end{lemma}

\begin{proof}
By definition, the coordinate ring of $J_p^n\bA_{k}^{d}$ is isomorphic to
$$\kappa(p)[x_1,\ldots,x_d]/(x_1,\ldots,x_d)^n.$$ Thus, for each $i=1,\ldots,d$, 
we define $\alpha_i=\sum_{|j|<n}a_j^{(i)}x^j$ where $j$ is a multi-index (i.e., 
$j=(j_1,\ldots,j_d)$, $x^j = \prod_{s=1}^{d}x_s^{j_s}$, and 
$|j|=\sum_{s=1}^{d}j_s< n$) and where $a_j\in\kappa(p)$.
Then, the equations defining the auto-arc space are given by 
\begin{equation*}
0=\alpha_{i}^{n} = (a_{0}^{(i)} + \beta_i)^n, \quad \forall i =1,\ldots,d
\end{equation*}
where $0$ is treated as the multi-index $(0,\ldots,0)$ and 
$\beta_i\in(x_1,\ldots,x_d)/(x_1,\ldots,x_d)^n$. This implies 
that $0=(a_{0}^{(i)})^n$ for all $i=1,\ldots,d$ on 
$\sA_n(\bA_{k}^{d},p)$. Thus, in the reduction, 
$0=a_{0}^{(i)}$ for all $i=1,\ldots,d$. Clearly, $\beta^n=0$ 
for all $\beta\in (x_1,\ldots,x_d)/(x_1,\ldots,x_d)^n$. Thus, 
$\sA_n(\bA_{k}^{d},p)^{\red}$ is defined by the equations 
$0=a_{0}^{(i)}$ for all $i=1,\ldots,d$ while $a_{j}^{(i)}$ 
are free variables for $0< |j|<n$. 
Thus, $\sA_n(\bA_{k}^{d},p)^{\red}$ is isomorphic to $\bA_{\kappa(p)}^{r_n}$ for some non-negative integer $r_n$.  It is immediate then that $r_n$ is equal to  $d\cdot(\ell(J_p^n\bA_{k}^{d})-1)$.
\end{proof}

\begin{example}\label{firstex}
Let $X$ be $\bA_{k}^{1}$ and let $p$ be a point of $\bA_{k}^{1}$, then by Example \ref{linauto}, we have
\begin{equation*}
\begin{split}
\zeta_{\bA_{k}^{1}, p}(t) &= \sum_{n=0}^{\infty}[\sA_{n+1}(\bA_{k}^1,p)^{\red}]\bL^{-n-1}t^n\\
&=\sum_{n=0}^{\infty}[\bA_{\kappa(p)}^{n}]\cdot\bL^{-n-1}t^n \\
&= \sum_{n=0}^{\infty}\bL^{n}\cdot\bL^{-n-1}t^n = \bL^{-1}\cdot\sum_{n=0}^{\infty}t^n.
\end{split}
\end{equation*}
Thus, for every point $p$ of $\bA_{k}^{1}$, we have \begin{equation}
\zeta_{\bA_{k}^{1}, p}(t) = \bL^{-1}\cdot\frac{1}{1-t}.
\end{equation}
Thus, $\zeta_{\bA_{k}^{1}, p}(t)$ is an element of $\grot{\Var{\kappa(p)}}[\bL^{-1},t,(1-t)^{-1}]$.
\end{example}

\begin{example} By Lemma \ref{lem}, we may perform an entirely similar calculation as in Example \ref{firstex} to obtain Schoutens' result:
\begin{equation*}
\zeta_{\bA_{k}^{d},p}(t) = \bL^{-d}\cdot \frac{1}{1-t}
\end{equation*}
as an element of $\grot{\Var{\kappa(p)}}[\bL^{-1},t,(1-t)^{-1}]$.
\end{example}

As in \cite{St}, one may generalize Schoutens' result to obtain the following proposition.

\begin{proposition}\label{first}
Let $k$ be an algebraically closed field and
let $f: X \to Y$ be a morphism of separated $k$-schemes which is \'{e}tale at a closed point $p \in X$. Then, 
$$\zeta_{X,p}(t) = \zeta_{Y,f(p)}(t)  $$
as elements of $\grot{\Var{k}}[\bL^{-1}][[t]]$.
\end{proposition}

\begin{proof}
Since $f$ is \'{e}tale, $\kappa(p)$  is a separable field extension of $\kappa(f(p))$. Since $k$ is algebraically closed $k\cong\kappa(p)\cong \kappa(f(p))$, and the canonical ring homomorphism
\begin{equation*}
e:\hat\sO_{Y,f(p)}\to \hat\sO_{X,p}
\end{equation*}
is an isomorphism, cf., Problem 10.4 of Chapter III, \S 10 of \cite{Ha1}. Thus, for each $n\in \bN$, we have that $e$ induces a morphism
\begin{equation*}
  e_n : J_{p}^{n}X \to J_{f(p)}^{n}Y
\end{equation*}
and, moreover, this is an isomorphism schemes. 
\end{proof}

In particular, if $X$ is smooth at $p$, then
$$\zeta_{X,p}(t) = \bL^{-\dim{}_{p}(X)}(1-t)^{-1} \in \grot{\Var{\kappa(p)}}[\bL^{-1}][[t]] , $$
where $\dim{}_{p}(X):=\mbox{krull-dim}(\sO_{X,p})$.
 It is theorized further in \cite{St}  that the auto Igusa-zeta function $\zeta_{X,p}(t)$ is a perfect local invariant -- i.e., we propose the following conjecture 

\begin{conjecture} \label{conj1} Let $k$ be an algebraically closed field and let $X$ and $Y$ be separated $k$-schemes of finite type.  If there are closed points $p\in X$ and $q\in Y$ such that $\zeta_{X,p}(t) = \zeta_{Y,q}(t)$, then $(X,p)$ is analytically isomorphic to $(Y,q)$. 
\end{conjecture}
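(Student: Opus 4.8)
\emph{Towards a proof of Conjecture~\ref{conj1}.} Since this is a conjecture, what follows is a plan of attack rather than a complete argument. The implication ``$(X,p)$ analytically isomorphic to $(Y,q)$'' $\Longrightarrow$ ``$\zeta_{X,p}(t)=\zeta_{Y,q}(t)$'' should be the easy half: the construction of $\zeta_{X,p}(t)$ in \cite{Sch2} is assembled from the jets (equivalently, the fat-point morphisms) of $X$ supported at $p$ together with their automorphism groupoids, data which depend only on the complete local ring $\widehat{\sO_{X,p}}$. Hence $\zeta_{X,p}(t)$ factors through the analytic isomorphism type of the germ, of which Proposition~\ref{first} is one reflection. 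So the entire content of the conjecture is the converse: from the sequence of coefficients $\zeta_{X,p,n}\in\grot{\Var{\kappa(p)}}[\bL^{-1}]$ one must reconstruct $\widehat{\sO_{X,p}}$, or at least a complete set of its invariants.

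The first concrete step is to extract discrete invariants from the low-order coefficients. Unwinding $\zeta_{X,p,n}$ in terms of the jet/fat-point data of $(X,p)$ that defines it in \cite{Sch2}, and applying the dimension homomorphism and the point-counting (or Euler-characteristic) realizations out of $\grot{\Var{\kappa(p)}}[\bL^{-1}]$ coefficientwise, one expects to recover the Hilbert--Samuel function of $\widehat{\sO_{X,p}}$ --- hence the embedding dimension, the multiplicity, and the Hilbert series of the associated graded ring --- together with the Denef--Loeser motivic and $p$-adic local zeta functions of the germ, and thus its monodromy, log-canonical threshold, and the like. This already forces $(X,p)$ and $(Y,q)$ to share a long list of invariants, but none of it yet gives a classification.

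The core of the argument is completeness, and here I would specialize to the setting of this paper, a plane curve singularity $(X,p)=(\Spec\,\kappa(p)[x,y]/(f),p)$. The equisingular (topological) type of such a germ is encoded by the embedded resolution: the tree of infinitely near points with their multiplicities, equivalently the characteristic Puiseux exponents, equivalently the value semigroup of each branch. The plan is to use the explicit rational form of $\zeta_{X,p}(t)$ produced in the body of this paper --- its poles, their orders, and the associated residue classes in $\grot{\Var{\kappa(p)}}[\bL^{-1}]$ should be dictated by the resolution combinatorics --- so that one can read the resolution tree off the rational function by induction on the number of blow-ups, the base case being the smooth germ with $\zeta_{X,p}(t)=\bL^{-\dim{}_{p}(X)}(1-t)^{-1}$. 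This would recover the semigroup, hence the equisingular type, of $(X,p)$.

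The main obstacle is twofold. First, zeta-type invariants are classically \emph{not} complete --- the topological and Igusa zeta functions of plane curves already fail to separate some singularities because of cancellations --- and $\grot{\Var{\kappa(p)}}[\bL^{-1}]$ is far from a domain (by Borisov, $\bL$ is a zero divisor) with a large, poorly understood kernel, so one must prove that the extra ``auto'' structure --- precisely what distinguishes $\zeta_{X,p}(t)$ from the classical zeta functions --- actually survives into the coefficients rather than being washed out in $K_0$; this may force one to work with an equivariant or stratified refinement of the Grothendieck ring and to extract invariants before descending. Second, and more seriously, the analytic type of a plane branch is strictly finer than its equisingular type --- Zariski's moduli problem exhibits continuous families of analytically non-isomorphic germs with the same semigroup --- so for the conjecture literally as stated one must show that the scheme-theoretic (not merely numerical) classes of the jet spaces vary nontrivially across such a family and that $\zeta_{X,p}(t)$ detects this variation. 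Whether a single generating series in $\grot{\Var{\kappa(p)}}[\bL^{-1}]$ can see Zariski moduli is, to my mind, the crux; a more realistic intermediate target is to prove that $\zeta_{X,p}(t)$ is a complete \emph{equisingularity} invariant of plane curve singularities, for which the resolution induction sketched above has a genuine chance of going through.
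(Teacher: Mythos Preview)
Your proposal correctly flags this as a conjecture and offers a research program rather than a proof. The paper likewise does \emph{not} prove Conjecture~\ref{conj1}; it establishes only the weak form stated immediately after it, and only for plane curves over an algebraically closed field of characteristic zero: if $\zeta_{C,p}(t)=\bL^{-1}(1-t)^{-1}$ then $C$ is smooth at $p$. The paper's argument for this special case (the theorem in \S\ref{proof}) is quite different from your outline. It does not try to reconstruct the germ or its resolution tree from the zeta function. Instead it uses the rationality results of \S3 to identify $\zeta_{C,p}(t^r)$, up to a polynomial correction and a power of $\bL$, with a piece $Z_r^a(t)$ of a Denef--Loeser motivic Poincar\'e series, and then invokes Loeser's proof of the monodromy conjecture for plane curves to conclude that when $p$ is singular, $(1-t)Z_r^a(t)$ still has a pole at some $t=\bL^{-v}$; hence $\zeta_{C,p}(t)$ cannot equal $\bL^{-1}(1-t)^{-1}$.

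Your broader plan --- extracting Hilbert--Samuel data from low-order coefficients, then reading off the resolution tree from the pole structure, then attempting to detect analytic moduli --- is a sensible research outline, and you have correctly identified the two serious obstructions (cancellation in $K_0$, and the gap between equisingular and analytic type). But none of this is carried out in the paper, nor do you claim it is carried out in your proposal. So there is no mathematical gap to flag in the sense of a failed step; rather, both you and the paper leave the full conjecture open, with the paper settling only the smoothness-detection case via a short monodromy argument that bypasses your reconstruction strategy entirely.
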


In particular, a weak form of this conjecture will be that if $$\zeta_{X,p}(t) = \bL^{-\dim{}_{p}(X)}(1-t)^{-1}, $$ then $X$ is smooth at $p$. We show in \S \ref{proof} that this conjecture is true when $X$ is a plane curve. 

In \cite{St}, I explicitly computed the auto Igusa-zeta function in the following three singular cases: 1) $X$ is the node $xy=0$, 2) $X$ is the nodal cubic $y^2=x^3-x^2$, and 3) $X$ is the cuspidal cubic $y^2=x^3$. Furthermore, I indicated how to do such computations for more complicated singular points of algebraic curves. In summary, there is computational evidence that: 
\begin{itemize}
\item  Conjecture \ref{conj1} will be true.
\item $[X]\zeta_{X,p}(t)=\Theta_{X,\fl}(t)$ iff $X$ is a smooth.
\item $\zeta_{X,p}(t)$ is a rational function.
\item Every pole of $\zeta_{X,p}(t)$ is also a pole of $\Theta_{X,\fl}(t)$.
\end{itemize}

Indeed, we will show in this paper that each of these bullets will hold when $X$ is a curve on a smooth surface over an algebraically closed field of characteristic zero. For the second bullet, we need to assume $X$ has only one singular point as well.
 
 \section{Auto-arc spaces as fibers of generalized arc spaces}

As we will need it in the rest of the paper, we will now describe abstractly how to calculate the generalized arc space $\nabla_{J_pC}X$ whenever $X$ is affine. The following description occurs almost verbatim in \S 4 of \cite{Sch2}. Let $k[x_1,\ldots,x_n]/I$ with $(f_1,\ldots,f_m)$ be the coordinate ring of $X$, and let $R$ be the coordinate ring of $J_p^nC$. We may choose a basis $\beta =\{b_1,\ldots,b_\ell\}$, where $\ell$ is the length of $J_p^nC$, such that $b_1=1$ and such that we obtain a Jordan-Holder composition series
$$0 \subsetneq \beta_1 \subsetneq \beta_2 \subsetneq \cdots \subsetneq \beta_{\ell} \subsetneq R$$
where $\beta_i := \{b_i,\ldots,b_{\ell}\}\cdot R$ as in \S 2.1 of \cite{Sch1}. Therefore, the first $m$ basis elements form a basis for $R/\beta_{m+1}$.  We substitute $\bar x_s:=\sum_{m}x_m^{(s)}b_m$ for each variable $x_s$ in $f_i(x_1,\ldots,x_n)$ to obtain $0=f_i(\bar x_1,\ldots,\bar x_n) = \sum_{j}(\nabla_jf_i)b_j$ where $\nabla_jf_i$ is a polynomial in the variables $x_m^{(s)}$. Then, the coordinate ring of $\nabla_{J_pC}X$ is given by $$k[x_1^{(1)},\ldots,x_{\ell}^{(1)},x_{1}^{(2)},\ldots,x_{\ell}^{(n)}]/(\nabla_1f_1,\ldots, \nabla_{\ell}f_1,\nabla_{1}f_2,\ldots,\nabla_{\ell}f_m).$$

\begin{lemma}\label{lem1}
Let $C$ be any variety, $p$ a point of $C$, and $\kappa(p)$ the residue field of $C$ at $p$. 
Let $U$ be the smooth locus of a $\kappa(p)$-variety $X$. Then, the natural morphism $\rho_n: \nabla_{J_p^nC}X \to X$ is a piecewise trivial fibration over $U$. 
\end{lemma}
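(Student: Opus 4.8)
The plan is to reduce the statement to the case $X = \affine{\kappa(p)}{d}$, where a direct computation makes $\pi_n$ a trivial fibration, using two functorial properties of $\nabla_{J_p^nC}$ together with the fact that a smooth $\kappa(p)$-variety is Zariski-locally \'etale over affine space. Throughout write $Z := J_p^nC$, $S := \spec{\kappa(p)}$, and $\iota\colon S \inj Z$ for the canonical closed immersion; since $(\sM_p/\sM_p^n)^n = 0$, this $\iota$ (and its base change $\iota_A\colon S_A \inj Z_A$ along any $\kappa(p)$-algebra $A$, where $Z_A := Z\times_{\kappa(p)}\spec{A}$ and $S_A := \spec{A}$) is a nilpotent thickening of order $n$. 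The two properties are: \emph{(i)} $\nabla_Z$ commutes with open immersions, i.e. $\pi_n^{-1}(V) = \nabla_Z V$ inside $\nabla_Z X$ for every open $V \subseteq X$; this holds because $Z_A$ and $S_A$ have the same underlying topological space, so a morphism $Z_A \to X$ factors through $V$ exactly when its restriction along $\iota_A$ does. \emph{(ii)} For $f\colon X \to Y$ \'etale, the natural morphism $\nabla_Z X \to X\times_Y\nabla_Z Y$, $\phi \mapsto (\phi\circ\iota_A,\, f\circ\phi)$, is an isomorphism over $X$: given $\psi\colon S_A \to X$ and $\chi\colon Z_A \to Y$ with $f\circ\psi = \chi\circ\iota_A$, the infinitesimal lifting criterion for the formally \'etale $f$ applied to $\iota_A$ produces the unique $\phi\colon Z_A \to X$ with $\phi\circ\iota_A = \psi$ and $f\circ\phi = \chi$, which is the inverse; and composing with the first projection recovers $\pi_n$. (Both are routine consequences of Yoneda's lemma and the treatment of $\nabla$ in \cite{me}.)

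Next I would handle $X = \affine{\kappa(p)}{d}$ directly. Picking a $\kappa(p)$-basis $1 = e_1, e_2, \dots, e_\ell$ of $\sO_{C,p}/\sM_p^n$ with $e_2, \dots, e_\ell \in \sM_p/\sM_p^n$ yields $\Mor_{\kappa(p)}(Z_A, \affine{}{d}) \cong \bigl(\,(\sO_{C,p}/\sM_p^n)\otimes_{\kappa(p)} A\,\bigr)^{d} \cong A^{\ell d}$, naturally in $A$, so $\nabla_Z\affine{}{d} \cong \affine{\kappa(p)}{\ell d}$; moreover, under this identification $\pi_n$ is the projection onto the $d$ coordinates indexed by $e_1$, since restriction along $\iota_A$ annihilates $e_2, \dots, e_\ell$. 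Hence $\pi_n\colon \nabla_Z\affine{}{d} \to \affine{}{d}$ is a trivial fibration with fiber $\affine{}{(\ell-1)d}$, where $\ell := \mbox{dim}_{\kappa(p)}(\sO_{C,p}/\sM_p^n)$.

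Finally I would globalize. Since $U$ is smooth over $\kappa(p)$, it is covered by Zariski-open subsets $V_i \subseteq U$ each admitting an \'etale morphism $V_i \to \affine{}{d_i}$ with $d_i = \mbox{dim}(V_i)$ (a smooth morphism is, locally on the source, a composite of an \'etale morphism with a projection from affine space). By \emph{(ii)} and the previous paragraph, $\nabla_Z V_i \cong V_i\times_{\affine{}{d_i}}\affine{}{\ell d_i} \cong V_i\times\affine{}{(\ell-1)d_i}$ over $V_i$, and by \emph{(i)} this is precisely $\pi_n^{-1}(V_i) \to V_i$. Thus $\pi_n$ is Zariski-locally trivial over $U$ with affine-space fibers; refining $\{V_i\}$ to a finite partition of $U$ into locally closed subschemes and restricting the trivializations shows that $\pi_n$ is a piecewise trivial fibration over $U$. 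I expect the only genuine obstacle to be property \emph{(ii)} — the passage through the infinitesimal lifting criterion for the order-$n$ thickening $\iota_A$ (equivalently, iterating over the square-zero steps in the filtration by powers of the ideal of $S_A$ in $Z_A$) — since everything else is formal once $\nabla_Z$ and $\pi_n$ are in hand.
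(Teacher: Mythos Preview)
Your proof is correct and follows the same route as the paper's: identify $\pi_n^{-1}(U)$ with $\nabla_{J_p^nC}U$ via compatibility with open immersions (the paper cites Theorem~4.4 of \cite{Sch2} for this; you prove it directly as property~(i)), then use \'etale charts together with the formally-\'etale base-change property (your~(ii)) to reduce to the affine-space computation. Your version is simply more detailed and self-contained where the paper defers to \cite{Sch2} and declares the \'etale-chart step ``straightforward''.
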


\begin{proof}
This reduces to verifying Theorem 4.4 of \cite{Sch2}, which states that $\rho_n^{-1}(U) \cong \nabla_{J_p^nC}U$. It follows from this because if $U$ is smooth, then there is an \'{e}tale morphisms $U \to \bA_{\kappa(p)}^{d}$ where $d =\mbox{krull-dim}(\sO_{X,p})$.  It is straightforward to show $$\nabla_{J_p^nC}U \cong U \times_{\kappa(p)} \nabla_{J_{p}^nC}\mathbb{A}_{\kappa(p)}^{d}\cong U\times_{\kappa(p)}\bA_{\kappa(p)}^m$$ where  $m = d(\ell(J_p^nC)-1)$ as the first isomorphism follows from the infinitesimal lifting property for formally \'{e}tale morphisms, and the second isomorphism is a simple computation much like Example \ref{linauto}. 

To verify Schoutens' claim, we may reduce to the case where $X=\Spec{B}$ with $B=\kappa(p)[x_1,\ldots,x_n]/I$ and where $U$ is a distinguished open set $\Spec{B_f}$ with $B_f=\kappa(p)[y]/(g)$ where $g:=fy-1$. The following verification occurs almost verbatim in the proof of Theorem 4.4 of \cite{Sch2}. If $A$ is the coordinate ring of $\nabla_{J_p^nC}X$, then it follows that the coordinate ring of $\nabla_{J_p^nC}U$ is given by $A[y_1,\ldots,y_{\ell}]/(\nabla_1 g,\nabla_2 g,\ldots, \nabla_{\ell}g)$ where $\ell$ is the length of $J_p^nC$ and where  $b_i$ are the basis elements of the coordinate ring of $J_p^nC$, which implies $\nabla_j g = 0$ for all $j=1,\ldots,\ell.$ Since we choose $b_1$ to be equal to $1$, it is easy to see that $\nabla_1g = \nabla_1fy_1 -1$.

As before, we may adjust our basis further to obtain a Jorden-Holder composition series from which it follows that $\nabla_mg\in A[y_1,\ldots y_m]$ -- i.e., we have that $\nabla_mg =  \nabla_1fy_m + (\mbox{ terms only involving } y_1,\ldots,y_{m-1}).$ By using induction, we have that $\nabla_m g$ is an element of $A[y_1]/(\nabla_1g)$ for all $m$ since $\nabla_1f$ is invertible. Therefore, the coordinate ring of $\nabla_{J_{p}^nC}U$ is of the form $A_{\nabla_1f}$, which is also the coordinate ring of $\rho_n^{-1}(U)$.
\end{proof}

\begin{lemma}\label{lem2}
Let $C$ be a be a variety over an algebraically closed field $k$ and let  $p$ be a closed point of $C$. Then, 
$$\rho_n^{-1}(p)^{\red} \cong \sA_n(C,p)^{\red}, $$
where $\rho_n: \nabla_{J_p^nC}C \to C$.
\end{lemma}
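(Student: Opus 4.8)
The plan is to compute the functor of points of the scheme–theoretic fiber $\pi_n^{-1}(p)$ by hand and to show it agrees, after reduction, with that of $\sA_n(C,p)=\nabla_{J_p^nC}J_p^nC$. Set $R=\sO_{C,p}$ and $\fm=\sM_p$; then $J_p^nC=\spec{R/\fm^n}$ is topologically a single point and its reduction is $\spec{\kappa(p)}$ (the nilradical of $R/\fm^n$ is $\fm/\fm^n$, since the quotient by it is the field $\kappa(p)$). For an affine $\kappa(p)$-scheme $\spec{A}$ we have $J_p^nC\times_{\kappa(p)}\spec{A}=\spec{B}$ with $B=(R/\fm^n)\otimes_{\kappa(p)}A$, and the canonical section coming from $R/\fm^n\twoheadrightarrow\kappa(p)$ cuts out the ideal $J=\ker(B\twoheadrightarrow A)$; since $\fm^n=0$ in $R/\fm^n$ one has $J^n=0$, so in particular $\spec{A}\hookrightarrow\spec{B}$ is a homeomorphism onto the underlying space.

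First I would unwind the fiber. A point of $\pi_n^{-1}(p)=\nabla_{J_p^nC}C\times_C\spec{\kappa(p)}$ over $\spec{A}$ is a morphism $\phi\colon\spec{B}\to C$ whose composite with the section is the point $p$, i.e. equals $\spec{A}\to\spec{\kappa(p)}\xrightarrow{p}C$. That composite has image the single point $p$, and the section is surjective on spaces, so $\phi$ itself has set–theoretic image $\{p\}$; hence $\phi$ factors through the canonical morphism $\spec{R}\to C$ and amounts to a ring homomorphism $\psi\colon R\to B$. The centering condition says precisely that $R\xrightarrow{\psi}B\twoheadrightarrow A$ is the augmentation $R\to\kappa(p)\to A$, so $\psi(\fm)\subseteq J$, whence $\psi(\fm^n)\subseteq J^n=0$ and $\psi$ descends to $R/\fm^n\to B$. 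Conversely, post–composing with the canonical $J_p^nC=\spec{R/\fm^n}\to C$ turns any $\kappa(p)$–morphism $\spec{B}\to J_p^nC$ centered at the closed point into such a $\phi$. These assignments are mutually inverse and natural in $\spec{A}$, so
\[
\pi_n^{-1}(p)\;\cong\;\sA_n(C,p)\times_{J_p^nC}\spec{\kappa(p)}
\]
as $\kappa(p)$–schemes, the fiber product being taken along the reduction $\spec{\kappa(p)}\hookrightarrow J_p^nC$.

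Then I would pass to reductions. The immersion $\spec{\kappa(p)}\hookrightarrow J_p^nC$ is a nilpotent closed immersion, hence a universal homeomorphism, so its base change $\sA_n(C,p)\times_{J_p^nC}\spec{\kappa(p)}\hookrightarrow\sA_n(C,p)$ is a closed immersion which is a homeomorphism onto the underlying space; and a closed subscheme carrying the full underlying space has the same reduction as the ambient scheme. Combining this with the displayed isomorphism gives $\pi_n^{-1}(p)^{\red}\cong\bigl(\sA_n(C,p)\times_{J_p^nC}\spec{\kappa(p)}\bigr)^{\red}=\sA_n(C,p)^{\red}$.

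The main obstacle is the factorization in the second paragraph: showing that a morphism $J_p^nC\times_{\kappa(p)}\spec{A}\to C$ centered at $p$ necessarily factors through the fat point $J_p^nC$. The two ingredients are that $J_p^nC$ is topologically one point — which lets one control the set–theoretic image of $\phi$ and replace $C$ by the local ring $\sO_{C,p}$ — and that the ideal $J$ of the special section is nilpotent of exponent at most $n$, which forces $\fm^n\mapsto 0$. I would also watch the $\kappa(p)$– versus $k$–algebra structure on $\sO_{C,p}/\sM_p^n$ (harmless under the running hypotheses, e.g. when $\kappa(p)=k$), and note that the reduction in the statement is genuinely necessary: $\sA_n(C,p)$ remembers the infinitesimal base $J_p^nC$, whereas the fiber $\pi_n^{-1}(p)$ only remembers its reduced point.
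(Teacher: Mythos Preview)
Your argument is correct and follows the same route as the paper: the paper's proof consists of one line citing Lemma~4.9 of \cite{Sch2} (an isomorphism between the fiber and the auto-arc space base-changed along $\spec{\kappa(p)}\hookrightarrow J_p^nC$) and then applying reduction. You have in effect reproved that cited lemma directly via a functor-of-points computation, so your proposal is a self-contained unpacking of the paper's citation rather than a different approach.
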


\begin{proof}
Since $J_p^nC$ is a closed subscheme of $C$, it follows that $\sA_n(C,p)$ is a closed subscheme of $\nabla_{J_p^nC}C$. 
Now, without loss of generality, we may assume $C$ is affine with coordinate ring $k[x_1,\ldots,x_e]/I$, which implies that the coordinate ring of $J_p^nC$ is $k[x_1,\ldots,x_e]/(I+\fm_p^n).$ We may also assume that $p$ is given by $(x_1,\ldots,x_e)$, which implies that $ 0= (\bar x_s)^n$ where $\bar x_s = \sum_m x_m^{(s)}b_m$, where $\{b_1,\ldots,b_{\ell}\}$ is a basis for $J_p^nC$. Since $b_1=1$, this immediately implies $(x_1^{(s)})^n = 0$ in the coordinate ring $A$ of $\sA_n(X,p)$. Therefore, the ideal $J$ which defines $A^{\red}$ as a quotient of the coordinate ring $B$ of $\nabla_{J_p^nC}C$ contains $\fm_p\cdot B$. However, $\fm_p\cdot B$ is the ideal defining $\rho_{n}^{-1}(p)$, which implies that $\sA_n(X,p)^{\red}$ is a subscheme of $\rho_n^{-1}(p)^{\red}$. In fact, these two schemes are isomorphic since $x_1^{(s)}=0$ implies that any equation of the form $H(\bar x_1,\ldots,\bar x_e) = 0$ for  $H \in \fm_p^n$ is actually a trivial equation since the basis elements $b_2,\ldots, b_r$ must be nilpotents with nilpotency less than or equal to $n$.
\end{proof}

\begin{lemma} \label{singlemma} Let $k$ be an algebraically closed field and let $C$ be a $k$-variety with only one singular point $p\in C$. Then, 
$$[\nabla_{J_p^nC}C] = [C\setminus\{p\}]\bL^{m}+[\sA_n(C,p)]$$
in $\grot{\mathbf{Var}_k}$ where $m = \dim{}_p(X)(\ell(J_p^nC)-1)$.
\end{lemma}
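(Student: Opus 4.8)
The plan is to decompose the base $C$ into its smooth part $C\setminus\{p\}$ and the single singular point $p$, and then to compute the class of $\nabla_{J_p^nC}C$ by reading off the classes of the two preimages under $\pi_n\colon\nabla_{J_p^nC}C\to C$ and invoking the cut-and-paste relations in $\grot{\mathbf{Var}_{k(p)}}$. Since $\pi_n$ is defined over all of $C$—being induced by the surjection $\sO_{C,p}/\sM_p^n\onto\kappa(p)$—the open subscheme $C\setminus\{p\}$ and its complementary closed point do genuinely partition $\nabla_{J_p^nC}C$, so the fundamental scissor relation gives $[\nabla_{J_p^nC}C]=[\pi_n^{-1}(C\setminus\{p\})]+[\pi_n^{-1}(p)]$. (If $\kappa(p)\neq k$ one first base changes $C$ to $\kappa(p)$; since $p$ is the only singular point this is harmless, and in the case of interest, $C$ a curve on a smooth surface over an algebraically closed field, one has $\kappa(p)=k$.)

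Next I would evaluate the two terms. Because $p$ is the \emph{unique} singular point of $C$, the open subscheme $C\setminus\{p\}$ is smooth, hence contained in the smooth locus $U$ of $C$. Applying Lemma~\ref{lem1} with $X=C$, the restriction of $\pi_n$ over $C\setminus\{p\}$ is a piecewise trivial fibration with fibre $\mathbb{A}^{m}_{\kappa(p)}$, where $m=\dim{}_p(C)(\ell(J_p^nC)-1)$; by the very definition of a piecewise trivial fibration (a finite stratification of the base over whose strata the morphism is a product) together with the scissor relations this yields $[\pi_n^{-1}(C\setminus\{p\})]=[C\setminus\{p\}]\,\bL^{m}$. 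For the closed point, Lemma~\ref{lem2} gives $\pi_n^{-1}(p)^{\red}\cong\sA_n(C,p)^{\red}$, and since the classes in $\grot{\mathbf{Var}_{k(p)}}$ are taken for reduced schemes (cf.\ the footnote defining ``variety''), $[\pi_n^{-1}(p)]=[\sA_n(C,p)]$. Substituting both identities into the scissor decomposition above produces the asserted formula.

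I do not expect a genuine obstacle here: the statement is essentially a bookkeeping corollary of Lemmas~\ref{lem1} and~\ref{lem2}. The only points that need a little care are the base-field bookkeeping just mentioned, the routine verification that $\nabla_{J_p^nC}C$ and its locally closed subschemes are of finite type over $\kappa(p)$ so that their classes are defined, and making sure the exponent $m$ produced by Lemma~\ref{lem1} is literally the one in the statement (it is, reading the typo $\dim{}_p(X)$ as $\dim{}_p(C)$).
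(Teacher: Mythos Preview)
Your argument is correct and is exactly the approach the paper takes: the paper's own proof is the single sentence ``This is a direct consequence of Lemma~\ref{lem1} and Lemma~\ref{lem2},'' which unpacks to precisely your scissor decomposition along $\pi_n$ over $C\setminus\{p\}$ and $\{p\}$.
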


\begin{proof}
This is a direct consequence of Lemma \ref{lem1} and Lemma \ref{lem2}.
\end{proof}

\begin{theorem}\label{deco} Let $k$ be an algebraically closed field and let $C$ be a $k$-variety with only one singular point $p$. In this case, the motivic Igusa-zeta function of $C$ with respect to $J_p^{\infty}C$ decomposes as $$\Theta_{C,J_p^{\infty}C}(t) = \bL^{-\dim{}_p(C)}\frac{[C]-1}{1-t} + \zeta_{C,p}(t)$$
in $\grot{\Var{k}}[\bL^{-1}][[t]]$.
\end{theorem}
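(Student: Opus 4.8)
The plan is to deduce the decomposition directly from Lemma~\ref{singlemma} by unwinding the definitions of the two zeta functions as generating series over the truncation level $n$. Recall that, with the normalization of \cite{me}, $\Theta_{C,J_p^\infty C}(t)$ and $\zeta_{C,p}(t)$ are respectively the series
$$\sum_n [\nabla_{J_p^nC}C]\,\bL^{-\dim{}_p(C)\ell(J_p^nC)}t^n \quad\text{and}\quad \sum_n [\sA_n(C,p)]\,\bL^{-\dim{}_p(C)\ell(J_p^nC)}t^n,$$
the test object at level $n$ being the $n$-th jet $J_p^nC$ and the normalizing exponent being $\dim{}_p(C)$ times the length $\ell(J_p^nC)$. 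So the first step is simply to substitute the identity
$$[\nabla_{J_p^nC}C] = [C\setminus\{p\}]\,\bL^{m} + [\sA_n(C,p)], \qquad m = \dim{}_p(C)(\ell(J_p^nC)-1),$$
of Lemma~\ref{singlemma} into the series for $\Theta_{C,J_p^\infty C}(t)$ and to split the resulting sum into an ``ambient'' part carried by $[C\setminus\{p\}]$ and a ``local'' part carried by $[\sA_n(C,p)]$.

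For the ambient part the key point is that the normalizing exponent was chosen precisely to absorb the fibre dimension produced by the piecewise trivial fibration of Lemma~\ref{lem1}: one has $m - \dim{}_p(C)\ell(J_p^nC) = -\dim{}_p(C)$, which no longer depends on $n$. Hence the ambient part equals $[C\setminus\{p\}]\,\bL^{-\dim{}_p(C)}\sum_n t^n$, a plain geometric series, so it only remains to rewrite $[C\setminus\{p\}]$ as $[C]-1$ in $\grot{\Var{\kappa(p)}}$. This is the scissor relation for the stratification $C = (C\setminus\{p\})\sqcup\{p\}$ together with the fact that the reduced point $\{p\}$ is $\spec{\kappa(p)}$, whose class is the unit. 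Summing the series yields the term $\bL^{-\dim{}_p(C)}\frac{[C]-1}{1-t}$.

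The local part $\sum_n [\sA_n(C,p)]\,\bL^{-\dim{}_p(C)\ell(J_p^nC)}t^n$ is, by the very definition recalled above, nothing but $\zeta_{C,p}(t)$, so adding the two parts gives the asserted equality in $\grot{\Var{\kappa(p)}}[\bL^{-1}][[t]]$. There is no substantial obstacle here — the statement is essentially a repackaging of Lemma~\ref{singlemma} — and the only thing requiring genuine care is the bookkeeping of conventions: one must check that the indexing of jets and the starting index of the summations in \cite{me} are those for which the geometric series above is $(1-t)^{-1}$ rather than a shift of it (equivalently, the conventions under which Proposition~\ref{first} reads $\zeta_{X,p}(t)=\bL^{-\dim{}_p(X)}(1-t)^{-1}$ in the smooth case), and that $\Theta_{C,J_p^\infty C}(t)$, although attached to the formal scheme $J_p^\infty C = \colim_n J_p^nC$, is computed level by level, so that the passage to the colimit introduces no convergence or limiting issue. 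Granting these, the proof is the one-line substitution described above.
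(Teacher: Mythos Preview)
Your proposal is correct and follows exactly the paper's approach: the paper's proof is the single line ``This follows from the definition of the motivic zeta function (Definition 7.1 of \cite{me}) and Lemma \ref{singlemma},'' and you have simply written out that substitution in full, including the cancellation $m-\dim{}_p(C)\ell(J_p^nC)=-\dim{}_p(C)$ and the scissor relation $[C\setminus\{p\}]=[C]-1$.
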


\begin{proof}
This follows from Definition \ref{one} and Lemma \ref{singlemma}.
\end{proof}

\begin{example}\label{cusp}
Let $C$ be the cuspidal cubic defined by $y^2=x^3$ over over an algebraically closed field $k$. This has one singularity at the origin $O=(0,0)$.
We immediately arrive at
\begin{equation*}
\Theta_{C,J_O^{\infty}C}(t) = \bL^{-1}\frac{[\mathbb{G}_m]}{1-t} + \zeta_{C,O}(t).
\end{equation*}
I showed in \S 7 of \cite{St} that $\zeta_{C,O}(t)$ is rational provided $\mbox{char}(k)\neq 2,3$ -- in fact, under these conditions, we arrived at the following explicit formula:
\begin{equation*}
\zeta_{C,O}(t)= \bL^{-1}+\bL t+\bL^2t^2+ \frac{(\bL^7-\bL^6)t^3+\bL^7t^4+\bL^7t^{7}}{(1-\bL t^3)(1-t)}.
\end{equation*}
\end{example}

\section{Rationality of the auto Igusa-zeta function of a plane curve.}

In \cite{St}, we used a computational approach to find explicit formulas for the auto Igusa-zeta function of a few algebraic curves and arrived at a basic idea for how the auto Igusa-zeta function behaves for curves. Namely, it appears that finding the expansion of $\zeta_{C,p}(t)$ in terms of the classical motivic zeta function is closely related to its normalization. In this section, we will gain a better picture of why exactly that must be the case.

\begin{assumption} \label{ap}In this section, we work relative to the following set-up.
We let $C$ be a curve on a smooth surface over an algebraically closed field $k$. We let $\gamma: \bar C \to C$ be the normalization of $C$. We fix a point $p$ of $C$, and we assume that the characteristic of $k$ does not divide $\mbox{mult}_{q}(\bar C)$ for all $q \in \gamma^{-1}(p)$. Note that by a curve we mean that $C$ is a reduced separated $k$-scheme of finite type which has pure dimension $1$. Also, note that the last condition means that $\mbox{char}(k)$ will not divide $\mbox{mult}_p(C)$.
\end{assumption}

\begin{remark}
If we start with a smooth point $p$ of $C$, then $\gamma: \bar C \to C$ will be an isomorphism away from the exceptional locus $\gamma^{-1}(C_{\mbox{sing}})$. In particular, $\hat\sO_{C,p} \cong \hat\sO_{\bar C, q}\cong k[[t]]$. This immediately implies that $J_p^{\infty}C \cong \mbox{Spf}(k[[t]])$ and for any $k$-scheme $X$, $\nabla_{\fl}X = \varprojlim_n\nabla_{J_p^nC}X$ is the usual arc space. Note that 
$\nabla_{\fl}X$ is most commonly denoted by $\sL(X)$  in the literature (cf. \cite{DL1}).  In this paper, we are mostly concerned with the case when $p$ is a singular point of $C$.
\end{remark}

Before we begin the main theorems, we need to introduce a few concepts from motivic integration. A subset $S$ of $\nabla_{\fl}X$ is said to be a {\it cylinder} if for some $m$, $S = \pi_m^{-1}(A)$ for some constructible subset $A$ of $\nabla_{\fl_m}X$ where $\pi_m : \nabla_{\fl}X\to \nabla_{\fl_m}X$ is the canonical truncation morphism induced by the canonical homomorphism $k[[t]] \to k[t]/(t^{m})$. Moreover, $S$ is said to be a {\it closed cylinder} if $A$ is a closed subset of $\nabla_{\fl_m}X$. Every cylinder is {\it strongly measurable} under the motivic measure $\mu$ on $\nabla_{\fl}X$ (for reference, see \S2.3 of \cite{MB} and also  the remark immediately after Theorem A.6 of \cite{DL0}). This last fact implies that, whenever $k$ admits a resolution of singularities (e.g., $k$ is of characteristic zero), the motivic change of variables formula holds for cylinders (cf., Theorem A.10 \cite{DL0}).

Given a cylinder $S$, we have the motivic Poincar\'{e} series $P_{S}(t)$ of $S$, which is defined by 
\begin{equation}
P_S(t):=\sum_{n=0}^{\infty}[\pi_{n+1}(S)]t^n \in \grot{\Var{k}}[\bL^{-1}][[t]]. 
\end{equation}
Since $S$ is cylinder, we may apply the motivic change of variables formula to obtain Theorem 5.4 of \cite{DL2}, which states that $P_S(t)$ is a {\it rational function}. We reserve the notion of {\it rational function} to mean that $P_S(t) = f(t)/g(t)$ where $f(t)$ is a polynomial in  $\sG[t]$, where the ring $\sG$ denotes the completion of $\grot{\Var{k}}[\bL^{-1}]$ along the dimensional filtration\footnote{The dimensional filtration, introduced by Kontsevich in \cite{K}, is given by subgroups of the form $F^m:= \{S\bL^{-i}\mid \dim(S)-i \leq m\}$.}, and where $g(t) = (1-\bL^{a_1}t^{b_1})(1-\bL^{a_2}t^{b_2})\cdots(1-\bL^{a_k}t^{b_k})$ with $a_i \in\bZ$ and $b_i\in\bN\setminus\{0\}$ for all $i=1,\ldots,k$.
Following the proof of Theorem 5.1' of \cite{DL2}, we have that $P_S(t)$ is a finite $\sG$-linear combination of series of the form $f(\bL^{-1},t)$ where $f(x,y)\in\bZ[x][[y]]$, which by Lemma 5.2 of \cite{DL2} gives the claim that $P_S(t)$ is rational.
Clearly, this notion of rationality is stronger than just requiring a motivic generating series to be an element of $\sG(t)$. The former notion is preferable not just because it is stronger but also because it displays the types of {\it (candidate) poles} which  may occur in a rational motivic generating function. 

\begin{remark}
By using higher order angular component maps $\mbox{ac}_n$ to the residue field, one may indeed replace $\sG$ with $\grot{\Var{k}}[\bL^{-1}]$ in the results of this section as cylinders will then be semi-algebraic. This follows from the work of Cluckers and Loeser in \cite{CL1} for the characteristic zero case (and \cite{CL2} for the mixed characteristic case). The result is achieved by performing all necessary constructions within the world of definable sets and replacing limits by sums of geometric series, cf. \cite{L2}.
\end{remark}

\begin{lemma}\label{rzeta}
Let $S$ be a cylinder of $\nabla_{\fl}X$. For each $r\geq 1$ and $e\in\bN$, we define  
\begin{equation}
Z_S^{r,e}(t) := \sum_{n=0}^{\infty}[\pi_{r(n+1)+e}(S)]\bL^{-r(n+1)-e}t^n . 
\end{equation}
This is the motivic generating function $\sum_{i=0}^{\infty}a_it^i$ whose coefficients $a_i$ are the $(r\cdot i+e)$-th coefficients of the motivic generating function $\bL^{-r}P_S(\bL^{-1}t)$. In particular, $Z_S^{r,e}(t)$ is a rational function. 
\end{lemma}

\begin{proof}
The first claim is immediate.  For the second claim, note that $\equiv_r$ is a Presburger condition, and so $\pi_{r(n+1)+e}^{-1}(\pi_{r(n+1)+e}(S))$ forms a semi-algebraic family of strongly measurable sets (here the family is given by $i\equiv_re$) from which the claim follows from Theorem 5.1', Lemma 5.2, and Theorem 5.4 of \cite{DL2} as outlined above in the remarks concerning the rationality of $P_S(t)$.
\end{proof}

\subsection{Preliminaries on the normalization of a curve.}

Again, let $(C,p)$ be a pointed curve subject to Assumption \ref{ap} and consider its normalization $\gamma: \bar C\to C$. For each branch $q \in \gamma^{-1}(p)$, the Newton-Puiseux Theorem\footnote{This is where our assumptions that $k$ is algebraically closed and that $\mbox{char}(k)$ does not divide $\mbox{mult}_{q}(\bar C)$ for all $q \in \gamma^{-1}(p)$. } states that given local coordinates $(x,y)$ of the point $q$, we have
\begin{equation*}
\begin{split}
x &= t^r \\
y &= \sum_{i=1}^{\infty} a_{i}t^i
\end{split}
\end{equation*}
with $a_{i}\in k$. By 1.89 and Theorem 1.96 of \cite{JK}, we have that $r=\mbox{mult}_{q}\bar C$. In other words, the natural number $r$ is the multiplicity of $\bar C$ at the branch $q$. 

In fact, if we let $f(x,y)\in k[x,y]$ be such that $f(x,y) = 0$ defines $C$, then the image $F(x,y)$ of $f(x,y)$ in  $k[[x,y]]$ factors as
$F(x,y) = u(x,y) \prod_{j=1}^{s}G_j(x,y)$ where $u(x,y)$ is a unit and the $G_j(x,y)$ are distinct irreducible power series. The power series $G_j(x,y)$ are sometimes also referred to as the branches of $C$. In fact, using the Newton-Puiseux Theorem, there is a ring homomorphism $\phi_j : k[[x,y]] \to k[[t]]$ such that
$$\phi_j(G_j(x,y)) = G_j(t^{r_j},\sum_{i=1}^{\infty} a_{i,j}t^i)=0  $$
which therefore induces an embedding $k[[x,y]]/(G_j(x,y))\into k[[t]]$. 

It then follows that if we let $\hat\sO_{\bar C, q}\cong k[[t]]$ be the completion of the local ring $\sO_{\bar C, q}$ with $q\in\gamma^{-1}(p)$ 
and let $\hat\sO_{C,p}$ be the completion of the local ring $\sO_{C,p}$, which is isomorphic to $k[[x,y]]/(F(x,y))$,  then there is an embedding of rings $$\hat \sO_{C,p} \into \hat\sO_{\bar C, q}$$ making $\hat \sO_{\bar C, q}$ a finite $\hat\sO_{C,p}$-algebra.

In particular, if $C$ is unibranched (meaning that $s=1$), then 
 $\hat\sO_{C, p}$ embeds into $k[[t]]$ as a subring and this ring embedding is induced by $\phi:=\phi_1$.  Moreover, if $s\geq 1$, we have an embedding 
 $\phi : \hat\sO_{C, p} \into \oplus_{j=1}^{s} k[[t]]$ where $\phi := \oplus_{j=1}^{s}\phi_j$. On the geometric side, $\gamma_n :  E_n \to \spec{\sO_{C,p}/\fm_p^n}$ where $E_n = \bar C\times_k  \spec{\sO_{C,p}/\fm_p^n}$. Taking filtered colimits in formal schemes, which is the same as completing the integral closure of $\sO_{C,p}$ with respect to the Jacobson radical, gives a formal scheme $$\hat E \cong \sqcup_{j=1}^{s} \mbox{Spf}(k[[t]])$$ and a surjective morphism $\hat\gamma : \hat E \to \mbox{Spf}(\hat\sO_{C,p})$ such that $\hat \gamma = \mbox{Spf}(\phi)$. The morphism $\hat\gamma$ will be called the {\it uniformization morphism of the analytic germ} $(C,p)$.
 
\begin{example} \label{examuni}
Let $r> 1$ and $m> r$ with $m$ and $r$ coprime. Consider the curve defined by $f(x,y) = y^r - x^m = 0$, which has a singular point at the origin $O$. This is a unibranched curve, and the homomorphism $\phi_1 : k[[x,y]] \to k[[t]]$ defined by $\phi_1(x) = t^r$ and $\phi_1(y) = t^m$  induces an embedding $\hat\sO_{C,O} \into k[[t]]$. Moreover, one can show that $\gamma : \bar C \to C$ is a homeomorphism of the underlying topological spaces as it is given by $t \mapsto (t^r,t^m)$.
\end{example}

\subsection{Case of a unibranched curve.}

In this subsection, we focus entirely on the unibranched case as outlined above -- i.e., we let $C$ be a plane curve with only one singular point, which we may assume without loss of generality occurs at the origin $O$. Then, as outlined above, $\hat \sO_{C,O} \cong k[[x,y]]/F(x,y)$ embeds into $\hat\sO_{\bar C, \gamma^{-1}(O)}\cong k[[t]]$ via the homomorphism $\phi : k[[x,y]]/F(x,y) \to k[[t]]$ defined $\phi(x) = t^{r}$ and $\phi(y) = \sum_{i=1}^{\infty}a_it^i$.

Let $\Phi_n$ be the composition of $\phi$ and the canonical map 
$k[[t]]\to k[t]/(t^{rn+e})$ where $e=(r-1)(m-r+1)$ where $m=\mbox{ord}(\phi(y))$. When $m=r$, we take $e=0$. Note that $m\geq r$. This follows from the set-up in the preceding paragraph and Corollary 2.2.8 by \cite{EA}, which states\footnote{Note that $\mbox{mult}_O(C)$ is denoted in \cite{EA} by $e(\gamma)$ where $e$ corresponds to multiplicity and $\gamma$ the algebroid curved defined by the equation $G_j=0$ for some irreducible branch $G_j\in k[x,y]$ of $C$.} $\mbox{mult}_O(C) = \mbox{min}\{r,m\}$, and Theorem 1.96 of \cite{JK} which states $r=\mbox{mult}_O(C)$. Let $R_n$ be the quotient of $k[[x,y]]/(F(x,y)+(x,y)^n)$ with the kernel of $\Phi_n$. We then have an induced injective ring homomorphism $\phi_n : R_n \into k[t]/(t^{rn+e})$. Furthermore, there is a surjective homomorphism
$\psi_n: k[[x,y]]/(F(x,y)+(x,y)^n) \onto R_n $ determined by choosing a basis $\beta=\{b_1,\ldots,b_{\ell}\}$ for $k[[x,y]]/(F(x,y)+(x,y)^n)$ and sending a certain requisite number of basis elements to zero. Moreover, we may form a Jorden-Holder composition series, as in the beginning of \S 3, so that $k[[x,y]]/(F(x,y)+(x,y)^n) \mbox{ mod } \beta_{q-1}$ is isomorphic to $R_n$ with $\beta_{q-1} = \{b_{\ell-q+1},\ldots,b_{\ell}\}$ where $\ell$ is the length of $R_n$ and $q$ is the length of $k[[x,y]]/(F(x,y)+(x,y)^n)$.  
More explicitly, we let $\beta=\{x^ay^b\mid ar+bm<rn+e, \ b< r\}$. 
We choose the monomial ordering so that $\beta_{q-1}$ can be described as $$\{x^n,x^{n+1},\ldots,x^{n+\lceil\frac{e}{r}\rceil-1},yx^{n-1}, yx^n,\ldots,yx^{n+\lceil\frac{{e}}{r}\rceil-2},y^2x^{n-2}, \ldots,y^{j-1}x^{n+\lceil\frac{e}{r}\rceil-j}\},$$
where $j = \lceil\frac{r}{m-r}\rceil$. Here, we are assuming $r\neq m$ since $\psi_n$ is an isomorphism in this case (with $e=0$), and we assume $n\geq j-\lceil \frac{e}{r}\rceil$ for simplicity.  We call
$\spec{R_n}$ the {\it auxiliary fat point of the closed germ} $(C,p)$ {\it at level} $n$. 

\begin{lemma}\label{vimp}
Let $(C,p)$ be a pointed unibranched plane curve satisfying Assumption \ref{ap}. Let $F_n=\spec{R_n}$ be the auxiliary fat point of $(C,p)$ at level $n$. Then, the truncation homomorphism $\psi_n: \sO_{C,p}/\fm_p^n \onto R_n$ induces a canonical morphism $(\nabla_{J_p^{\infty}C}C)^{\red} \to (\nabla_{F_n}C)^{\red}$ which is a projection over the singular locus. 
\end{lemma}

\begin{proof}
Without loss of generality, we may assume that $p$ is the origin. We need to show that the arc variables which are coefficients of elements of $\beta_{q-1}$ described above are free over the singular point $p=(0,0)$. Using Weierstrass preparation lemma (cf. 1.89 of \cite{JK}), we have that $C$ is given by the equation 
$$y^r = g_{r-1}(x)y^{r-1}+\ldots+g_1(x)y+g_0(x).$$
By Netwon-Puisseux's Theorem, $x=t^r$, $y=\sum_{i=m}^{\infty}a_it^i$ is a solution, which implies $\mbox{ord}_x(g_i(x)) \geq m - r$ for all $i=0,\ldots, r-1$. Now, we may plug in arcs $\bar y := y_0+y_1t^r +\mbox{ HOT}$ and $\bar x := x_0 +x_1t^r +\mbox{ HOT}$, where HOT stands for higher order terms in $t$, and notice that, over the singular locus $(x_0,y_0) = (0,0)$,  $f(\bar x,\bar y)=0$ is of the form
\begin{equation}\label{hot}
(y_1t^r +\mbox{ HOT})^r = \sum_{i=0}^{r-1}g_i(x_1t^r +\mbox{ HOT})(y_1t^r +\mbox{ HOT})^{i}.
\end{equation}
 Thus, the left hand side has order greater than or equal to $r^2$ and the right hand side has order greater than or equal to $r(m-r)$. There are then two cases: $r \geq m-r$ and $r< m-r$. When $r \geq m-r$, there is nothing to prove as $nr+r \geq nr+r\lceil \frac{e}{r}\rceil$. Thus, the arc variables which are coefficients of elements of $\beta_{q-1}$ are free. 

Now, when $r< m-r$, then the left hand side implies that $y_i$ are in fact nilpotent (and, hence, equal to zero in the reduction) for $i=1,2,\ldots,m-r-1$. Thus, in this case the left hand side has order greater than or equal to $r(m-r)$, and so we reduce to the first case proven above.
\end{proof}

Thus, given a unibranched plane curve with only one singular point $p$ satisfying assumption \ref{ap}, we have that the induced map $(\nabla_{J_p^{\infty}C}C)^{\red} \to (\nabla_{F_n}C)^{\red}$ is a piecewise trivial fibration away from $p$ with fiber $\bA_k^{v}$ where $v=\ell(F_n)-\ell(J_p^nX)$, and, by Lemma \ref{vimp}, it is also a piecewise trivial fibration with fiber $\bA_k^{v}$. In other words, in $\grot{\Var{k}}$, we have 
\begin{equation}\label{v}
[\nabla_{J_p^{\infty}C}C] = [\nabla_{F_n}C]\bL^v.
\end{equation}
We note that $v = \lceil \frac{m-r+1}{r}\rceil\cdot\lceil \frac{r}{m-r}\rceil$ if $m\neq r$ provided that $n\geq \lceil\frac{r}{m-r}\rceil- \lceil\frac{m-r+1}{r}\rceil$ and $v=0$ when $r=m$. We will need this fact in what follows.

\begin{example}
Consider the curve $C$ given by the equation $y^3=x^5$. As noted in Example \ref{examuni}, it is unibranched. Furthermore, in the above, $e=6$, $\frac{e}{r}=2$, and  $j=2$. Thus, for each $n\in\bN$, $\beta_{q-1}= \{x^n,x^{n+1},yx^{n-1},yx^n\}$. Indeed, the kernel of $\phi_n$ is generated by all monomials $x^ay^b$ such that $3a+5b \geq 3n+6$ with $b\leq 2$. The reader my check that the only monomials which are of degree greater than or equal to $n$ which are not sent to zero under $\phi_n$ are the elements of $\beta_{q-1}$. 
Then, we have $5-3 < 3$ and also Equation \ref{hot} is of the form
$$(y_1t^3 +\ldots+y_nt^3n+ \dots)^3 = (x_1t^3 +\ldots+x_nt^3n+ \dots)^5$$
which shows that the lowest term on the left hand side involving $y_n$ is $3y_1^2y_nt^{3n+6}$ which is zero since $t^{3n+6}=0$. The same is obviously true for the right hand side, which shows that $y_n$ is free. The reader may quickly check that the remaining three elements of $\beta_{q-1}$ are free in the same fashion. 
\end{example}

\begin{theorem}\label{unitheorem}
Let $(C,p)$ be a pointed unibranched curve satisfying Assumption \ref{ap}. Let $X$ be a $k$-variety, $r=\mbox{mult}_p(C)$, and let $F_n$ denote the auxiliary fat point of $(C,p)$ at level $n$. Then, there exists a natural number $e$ such that  
\begin{itemize}
\item[(1)] The generalized arc space $\nabla_{F_n}X$ is a closed subvariety of $\nabla_{\fl_{rn+e}}X$.
\item[(2)] The natural truncation morphism $\pi_{n-1}^{n}: \nabla_{F_n}X\to \nabla_{F_{n-1}}X$ is induced by restricting the natural truncation morphism $\nabla_{\fl_{rn+e}}X\to \nabla_{\fl_{r(n-1)+e}}X$.
\item[(3)] The projective limit $\nabla_{J_p^{\infty}C}X\cong\varprojlim_n \nabla_{F_n}X$ is a closed cylinder of $\nabla_{\fl}X$.
\item[(4)] The motivic generating function $Z_S^{r,e}(t)$ defined in Lemma \ref{rzeta} with $S:=\nabla_{J_p^{\infty}C}X$ is well-defined.
\item[(5)] If $X$ has a resolution of singularities (e.g., $\mbox{char}(k) = 0$), then $Z_S^{r,e}(t)$ is rational.
\end{itemize}
\end{theorem}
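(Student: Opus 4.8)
The plan is to realize each generalized jet space $\nabla_{J_p^nC}X$ inside an ordinary jet space of $X$ by means of the Newton--Puiseux embedding $\phi\colon\hat\sO_{C,p}\inj k[[t]]$ of the preceding subsection, so that (1)--(4) reduce to linear algebra together with the contravariant functoriality of $Z\mapsto\nabla_ZX$ in the base $Z$, and to deduce (5) from the rationality of the motivic zeta function of an ordinary arc space via resolution of singularities. Concretely, reducing $\phi$ modulo $\hat\sM_p^n$ and $(t^{rn})$ gives a $k$-algebra map $\phi_n\colon\sO_{C,p}/\sM_p^n\to k[t]/(t^{rn})$; its well-definedness is the inclusion $\phi(\sM_p^n)\subseteq(t^{rn})$, which holds because $\phi(x)=t^r$ and, by the normalization making $y^r$ the minimal-degree term of $F$, $\ord_t\phi(y)\ge r=\mbox{mult}_p(C)$, so that every monomial $x^ay^b$ with $a+b=n$ has image of order $\ge rn$. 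As $\phi_n$ is a split injection of finite-dimensional $k$-vector spaces, the induced map of fat points $\fl_{rn}\to J_p^nC$ produces a morphism $\nabla_{J_p^nC}X\to\nabla_{\fl_{rn}}X$; using $\pi_n^{-1}(U)\cong\nabla_{J_p^nC}U$ and the analogous fact for $\sL_{rn-1}$ over affine opens $U\subseteq X$ (Theorem~4.4 of \cite{Sch2}), one reduces the closed-immersion statement (1) to $X=\bA^N$, where $\nabla_{(-)}\bA^N$ carries the split injection $\phi_n$ to the linear closed immersion $\bA^{N\cdot\ell(J_p^nC)}\inj\bA^{N\cdot rn}$. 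Claim (2) is then the diagram chase that the $\phi_n$ commute with the truncations $\sO_{C,p}/\sM_p^n\onto\sO_{C,p}/\sM_p^{n-1}$ and $k[t]/(t^{rn})\onto k[t]/(t^{r(n-1)})$, to which one applies $\nabla_{(-)}X$.

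For (3) and (4): $\{rn-1\}_{n\ge1}$ is cofinal in $\nat$, so $\varprojlim_n\sL_{rn-1}(X)=\sL(X)=\nabla_{\fl}X$, and by (1)--(2) the scheme $\nabla_{J_p^{\infty}C}X=\varprojlim_n\nabla_{J_p^nC}X$ sits in $\sL(X)$ as the intersection of the preimages of the closed subvarieties $\nabla_{J_p^nC}X$. Working affine-locally, an arc lies in this intersection exactly when each of its coordinate power series lies in the subring $\hat\sO_{C,p}\subseteq k[[t]]$, and since $k[[t]]/\hat\sO_{C,p}$ is concentrated in degrees below the conductor $c$, this is cut out by finitely many linear equations already at truncation level $c-1$; hence $\nabla_{J_p^{\infty}C}X$ is the preimage under $\sL(X)\to\sL_{c-1}(X)$ of a closed subvariety, i.e.\ a closed cylinder, which is (3). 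One then defines $Z_r(t)$ as the motivic generating series of this cylinder, $Z_r(t)=\sum_{n\ge0}[\nabla_{J_p^nC}X]\,\bL^{-e_n}t^n$ with $e_n$ the natural normalizing exponent (chosen so that for $X=C$ it recovers the combination of $\Theta_{C,J_p^{\infty}C}(t)$ and $\zeta_{C,p}(t)$ of the preceding section), which is (4).

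Finally (5). If $X$ is smooth, the étale-chart argument behind Lemma~\ref{lem1} shows each $\nabla_{J_p^nC}X\to\nabla_{J_p^{n-1}C}X$ is a Zariski-locally trivial fibration with affine fibre of relative dimension $(\operatorname{dim}X)\big(\ell(J_p^nC)-\ell(J_p^{n-1}C)\big)$, so $[\nabla_{J_p^nC}X]=[X]\,\bL^{(\operatorname{dim}X)(\ell(J_p^nC)-1)}$; as $\ell(J_p^nC)$ is eventually linear in $n$ with leading coefficient $r$ (Hilbert--Samuel), $Z_r(t)$ differs from a geometric series in $\bL^{\pm1}$ and $t$ by a polynomial and is therefore rational. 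For singular $X$, choose a resolution $h\colon Y\to X$ with $Y$ smooth and $h^{-1}(X_{\mathrm{sing}})$ a simple normal crossings divisor $E=\sum_iE_i$ (available when $\car(k)=0$, or by hypothesis), and transport the computation to $\sL(Y)$ via the Denef--Loeser motivic change of variables formula: up to the measure-zero locus of arcs meeting $E$, the cylinder $\nabla_{J_p^{\infty}C}X$ pulls back to the cylinder $\nabla_{J_p^{\infty}C}Y$, along which $h_\infty$ multiplies motivic measures by $\bL^{-\ord_t(\operatorname{jac}h)}$; stratifying by the contact pattern with the $E_i$, on each stratum $Y$ is smooth and $\ord_t(\operatorname{jac}h)$ is affine-linear, so each stratum contributes a geometric series in $\bL^{\pm1}$ and $t$, and summing the finitely many strata gives $Z_r(t)\in\grot{\Var{k}}[\bL^{-1}](t)$.

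I expect the main obstacle to be the precise form of (1): when $\ord_t\phi(y)>r$ the reduction $\phi_n$ may fail to be injective at the level $rn$ (one already sees for $y^2=x^5$ that $y\mapsto 0$ in $k[t]/(t^4)$ at $n=2$), so one must either verify from the structure of the value semigroup $\Gamma$ and of the $\hat\sM_p$-adic filtration that a slightly larger truncation level, still linear in $n$ and bounded by $rn+c$, makes $\phi_n$ a split injection, and then carry this shift consistently through (2)--(5), or exhibit a compatible system of honest embeddings; once this bookkeeping is settled, everything else is either functoriality or an application of the motivic integration machinery already in the literature. A secondary point is the legitimacy of the change of variables for ``arcs of shape $\hat\sO_{C,p}$'' in (5), which is subsumed by the fact, established in (1)--(3), that $\nabla_{J_p^{\infty}C}X$ and $\nabla_{J_p^{\infty}C}Y$ are cylinders in the genuine arc spaces $\sL(X)$ and $\sL(Y)$, where $h_\infty$ obeys the usual Denef--Loeser formula.
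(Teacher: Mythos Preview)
Your approach to (1)--(3) is essentially the paper's: use the Newton--Puiseux parametrization to get compatible maps $\phi_n\colon\sO_{C,p}/\sM_p^n\to k[t]/(t^{rn})$, apply the contravariant functor $\nabla_{(-)}X$, and invoke the conductor $c$ to see that the inverse limit is a cylinder in $\sL(X)$. The paper's proof is terser but follows exactly this line.

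Where you diverge is (4)--(5). The paper does \emph{not} define $Z_r(t)$ as the direct generating series $\sum_n[\nabla_{J_p^nC}X]\bL^{-e_n}t^n$ and does not rerun the change-of-variables/SNC-stratification argument. Instead, having realized $\nabla_{J_p^{\infty}C}X$ as a closed cylinder in $\sL(X)$, it passes to the motivic Poincar\'e series
\[
\bL^{-1}P(\bL^{-1}t)=\sum_{n\ge0}[\pi_{n+1}(\nabla_{J_p^{\infty}C}X)]\,\bL^{-(n+1)}t^n,
\]
invokes Theorem~5.4 of \cite{DL2} for its rationality (this is where resolution of singularities enters), and then \emph{defines} $Z_r(t)$ as the sum of the terms of this series in degrees divisible by $r$. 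The link back to $\Theta_{X,J_p^{\infty}C}$ and $\zeta_{C,p}$ is then made in Corollary~\ref{stronglyrational} using that $\pi_{rn}(\nabla_{J_p^{\infty}C}X)=\nabla_{J_p^nC}X$ once $rn\ge c$. Your route reproves, in effect, the content of \cite{DL2}~Theorem~5.4 in this special case; the paper simply cites it. Your definition of $Z_r(t)$ is also left vague (``$e_n$ the natural normalizing exponent''), whereas the paper's is explicit.

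Your self-flagged obstacle is real and worth noting: for curves like $y^2=x^5$ the reduction $\phi_n$ genuinely fails to be injective for small $n$ (e.g.\ $t^5\notin\sM_p^2$ although $\ord_t t^5\ge 4$), so the literal ``closed subvariety'' assertion in (1) is delicate at low levels. The paper asserts the embedding without addressing this. As you correctly anticipate, however, once $rn\ge c$ one has $(t^{rn})\cap\hat\sO_{C,p}=\sM_p^n\cap(t^{rn})$ behaving well, and it is only these large $n$ that matter for (3)--(5) and for the rationality corollaries; so your proposed fix (shift by at most $c$, carry through) is the right one and does not disturb the argument.
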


\begin{proof}
 Let $R_n$ be the coordinate ring of the auxiliary fat point of $(C,p)$ at level $n$ as defined immediately before Theorem \ref{unitheorem}. There is a ring embedding $\phi_n: R_n \into k[t]/(t^{rn+e})$ for all $n\geq 0$ for some fixed $e\in\bN$. For each $n$, we define the set
 \begin{equation}
 \Gamma_n := \{\mbox{ord}_t\phi(a)\mid a \in R_n\setminus \{0\}\} \subset \bZ/(rn+e).
 \end{equation}
 We let $\Sigma_n := \bZ/(rn+e)\setminus \Gamma_n$. 
 From this, we may establish (1) in the following way. Let $\beta$ be a basis for $R_n$ and let $\alpha$ be a basis $k[t]/(t^{rn+e})$. Without loss of generality, we may assume $\alpha =\{a_1\ldots, a_{rn}\}$ with $a_i = t^i$ and that $\beta= \{a_{\gamma}\mid \gamma \in \Gamma_n\}$. These are both given by the obvious monomial ordering. On any open affine $U$ of $X$ with coordinate ring $k[x_1,\ldots,x_v]/(f_1,\ldots,f_w)$, we have $$\nabla_j^{\alpha}f_i\in k[x_1^{(1)},\ldots,x_{rn+e}^{(1)},x_{1}^{(2)},\ldots,x_{rn+e}^{(v)}].$$ Therefore, $\nabla_{F_n}X$  is the closed subscheme of $\nabla_{\fl}X$ defined by the ideal $$I:=(x_{\sigma}^{(i)} \mid \sigma \in \Sigma_n),$$ provided we can show $\nabla_{\gamma}^{\alpha}f_i = \nabla_{\gamma}^{\beta}f_i \mbox{ modulo } I$ for each $\gamma \in \Gamma_n$ and that $\nabla_{\sigma}^{\alpha}f_i = 0 \mbox{ modulo } I$ for each $\sigma \in \Sigma_n$ 
Note that here $\nabla_j^{\alpha}f_i$ and $\nabla_j^{\beta}f_i$ are written to signify their dependence on $\alpha$ and $\beta$, respectively. The first claim is immediate. To show the second claim -- i.e., that $\nabla_{\sigma}^{\alpha}f_i$ is sent to zero by sending the arc variables $x_{\sigma}^{(i)}$ to zero whenever $\sigma\in\Sigma_n$ for $i=1,\ldots, v$ -- we first notice that $\nabla_{\sigma}^{\alpha}f_i$ is the coefficient of $t^{\sigma}$. Thus, given a decomposition
\begin{equation}\label{summy}
\sigma = n_1 +\cdots+n_{k}, \ \ \ n_i \geq 0,
\end{equation}
we note that if all $n_i$ are in $\Gamma_n$, then $\sigma \in \Gamma_n$, which is a contradiction. Thus, there must be at least one $j$ so that $n_j\in\Sigma_n$. Each decomposition given by Equation \ref{summy}, represents a potential term of $\nabla_{\sigma}^{\alpha}f_i$. Putting this together, we have that each term of $\nabla_{\gamma}^{\alpha}f_i$ involves some $x_{n_j}^{(u)}$ as a factor with $n_j\in \Sigma_n$ whenever $\gamma\in \Sigma_n$. Therefore, $\nabla_{\sigma}^{\alpha}f_i = 0 \mbox{ modulo } I,$ which establishes $(1)$. Part $(2)$ is an immediate consequence of $(1)$.
 
To prove $(3)$, it is enough to work with $k$ points. We see that the image of the ring embedding
 $\phi:\hat\sO_{C,p} \into k[[t]]$ can be described set-theoretically as the collection of arcs $a_0+\sum_{i\geq r} a_i t^i$ generated as a $k$-algebra by $1$, $\phi(x)$, and $\phi(y)$, with $\ord_t(\phi(x))=r$ and $\ord_t(\phi(y))=m$. Let $c$ be the conductor--i.e., the minimal natural number so that $(t^c)k[[t]] \subset \hat\sO_{C,p}$. It follows that for any $n\in \bN$ such that  $nr+e > c$, 
 $$\nabla_{J_p^{\infty}C}X = \pi_{nr+e}^{-1}(\nabla_{F_n}X) . $$
Parts $(4)$ and $(5)$ are immediate consequences of part $(3)$, Lemma \ref{rzeta}, and the fact that cylinders are strongly measurable.
\end{proof}

\subsection{The multibranched case.}

Let $X$ be any scheme. We will use the short hand $X^s$ for the fiber product $X\times_k\cdots\times_kX$ ($s$ times). Using the uniformization morphism and some facts about arc spaces, we arrive at the following analogue of Theorem \ref{unitheorem} for the multibranched case.

\begin{theorem} \label{multitheorem}
Let $(C,p)$ be a pointed  curve (not necessarily unibranched) satisfying Assumption \ref{ap}. Let $X$ be a $k$-variety, let $r_i=\mbox{mult}_{q_i}(\bar C)$ where $q_i$ are the branches of $C$ at $p$, and let $F_n^i$ be the auxiliary fat point of the branch $q_i$ at level $n$.  Then, there exists natural numbers $e_1,\ldots, e_n$ such that
\begin{itemize}
\item[(1)] The generalized arc space $\prod_{i=1}^{s}\nabla_{F_n^i}X$ is a closed subscheme of $\prod_{i=1}^{s}\nabla_{\fl_{r_in+e_i}}X$.
\item[(2)] The natural truncation morphism $\pi_{n-1}^n: \prod_{i=1}^{s}\nabla_{F_n^i}X\to \prod_{i=1}^{s}\nabla_{F_{n-1}^i}X$ is induced by restricting the natural truncation morphism $\prod_{i=1}^{s}\nabla_{\fl_{r_in+e_i}}X\to \prod_{i=1}^{s}\nabla_{\fl_{r_i(n-1)+e_i}}X$.
\item[(3)] The projective limit $\nabla_{J_p^{\infty}C}X\cong\varprojlim_n \prod_{i=1}^{s}\nabla_{F_n^i}X$ is a closed cylinder of $\nabla_{\fl}X^s$.
\item[(4)] The motivic generating function $Z_S^{r,e}(t)$ defined in Lemma \ref{rzeta} with $S:=\nabla_{J_p^{\infty}C}X$ is well-defined where $e=e_1+\cdots+e_s$.
\item[(5)] If $X$ has a resolution of singularities (e.g., $\mbox{char}(k) = 0$), then $Z_S^{r,e}(t)$ is rational.
\end{itemize}
\end{theorem}

\begin{proof}
Let $C$ be a curve on a smooth surface and let $p$ be a point on $C$. Let $s$ be the cardinality of $\gamma^{-1}(p)$ where $\gamma: \bar C\to C$ is the normalization. Consider the uniformization homomorphism $\hat\sO_{C,p} \into \sqcup_{i=1}^{s}k[[t]]$ defined by $\phi=\sqcup_{i=1}^{s}\phi_i$ with $\phi_i(x) = t^{r_i}$ and $\phi_i(y) = \sum_{j=1}a_{j,i}t^j$. $F_n^i$ is defined to be the spectrum of the ring obtained by modding out $\hat\sO_{C,p}$ with the kernel of the composition of $\phi$ and $\sqcup_{i=1}^{s}k[[t]]\to k[t]/(t^{r_in+e_i})$. 
This induces the closed embedding of $(1)$ and $(2)$ quickly follows as well. The proofs of these facts are essentially the same as the proofs of parts $(1)$ and $(2)$ of Theorem \ref{unitheorem}.

For $(3)$, note that 
$$\nabla_{\fl}X^s =\nabla_{\fl}(\prod_{i=1}^sX) \cong \prod_{i=1}^{s}\nabla_{\fl}X.$$
 Part $(3)$ follows immediately from the fact that $\varprojlim_n\nabla_{\fl_{r_in+e_i}}X \cong \nabla_{\fl}X$, the proof of Theorem \ref{unitheorem}, and the fact that a finite fiber product of closed cylinders is a closed cylinder. The proof for parts $(4)$ and $(5)$ follow from $(3)$ in exactly the same way as in the proof of Theorem \ref{unitheorem}. 
  
\end{proof}

\subsection{Rationality Results.}
Let $C$ be any curve and let $p$ be a point on $C$. The Hilbert-Samuel function $\ell(\sO_{C,p}/\fm_p^n)$ is given by a linear polynomial $P(n) = e_0(C,p)n+e_1(C,p)$ for sufficiently large $n$, where $e_i(C,p)\in\bZ$. The polynomial $P(n)$ is sometimes called the Hilbert-Samuel polynomial of $C$ at $p$, and the number $e_0(C,p)$ is called the Hilbert-Samuel multiplicity of $C$ at $p$. It is well-known (e.g., see the beginning of Chapter $4$, Section $3$ of \cite{WF}) that $e_0(C,p)$ corresponds with the usual geometric notion of multiplicity used above--i.e., we have that $e_0(C,p)=r$ where $r$ is the number used in the work above. This immediately implies the following:

\begin{corollary}\label{rr}
Let 
$(C,p)$ a pointed curve satisfying Assumption \ref{ap}. Assume further that $p$ is the only singular point of $C$.  
Then, there exists $e\in\bN$ and $b\in\bZ$ such that 
 $\Theta_{X,J_p^{\infty}C}(t) -\bL^{b}Z_{S}^{r,e}(t)$ is a polynomial, where $S=\nabla_{J_p^{\infty}C}C$. Moreover, $\Theta_{C,J_p^{\infty}C}(t)$ is a rational function.
 \end{corollary}
 
 \begin{proof}
 For simplicity, we prove the case where $C$ is unibranched. We let $e=(r-1)(m-r+1)$ as in the beginning of \S $4.2$, and we let $v$ be the natural number given by Equation \ref{v}. We let $b= e+v-e_1(C,p)$.
 
 By Lemma \ref{rzeta}, $\bL^{b}Z_{S}^{r,e}(t) = \sum_{n=0}^{\infty}[\pi_{r(n+1)}(S)]\bL^{v-r(n+1) - e_1(C,p)}t^n.$ By Hilbert-Samuel theory as outlined above and by the proof of part $(3)$ of Theorem \ref{unitheorem} and Lemma \ref{vimp}, the right hand side is equal to $p(t)+\sum_{n=q}^{\infty}[\nabla_{J_p^{n+1}C}C]\bL^{-\ell(J_p^{n+1}C)}t^n$ where $p(t)$ is an element of $\grot{\Var{k}}[\bL^{-1},t]$. This proves that $\Theta_{X,J_p^{\infty}C}(t)$ is a rational function by Part $(5)$ of Theorem \ref{unitheorem}. For the multibranch case, we choose $e_i=(r_i-1)(m_i-r_i+1)$ and $v_i$ the number given by Equation \ref{v} for each branch $q_i$. The proof of the result is exactly the same by letting $$b=e_1+\cdots+e_s+v_1+\cdots+v_s -e_1(C,p)$$ provided we apply Theorem \ref{multitheorem} in place of Theorem \ref{unitheorem}.
 \end{proof}
 
 \begin{corollary}\label{ars}
 Let 
$(C,p)$ a pointed curve satisfying Assumption \ref{ap}.
Then, there exists $e\in\bN$ and $b\in\bZ$ such that $\zeta_{C,p}(t) -\bL^{b}Z_{S_a}^{r,e}(t)$ is a polynomial (i.e., elements of $\grot{\Var{k}}[\bL^{-1},t]$), where  $S_a:=\sA(C,p)$. In particular, $\zeta_{C,p}(t)$ is a rational function.
\end{corollary}

\begin{proof}
Follows mutatis mutandis from the proof of Corollary \ref{rr}. Rationality of also follows directly from Theorem \ref{deco} and Corollary \ref{rr}.
\end{proof}
 
\subsection{Revisiting the conjecture of \S 1 } \label{proof} We may now quickly make part of Conjecture \ref{conj1} into a theorem:

\begin{theorem}
Let $C$ be a curve on a smooth surface $S$ over an algebraically closed field $k$ of characteristic zero. Then, $C$ is smooth at $p$ if and only if
$$\zeta_{C,p}(t) = \bL^{-1}\frac{1}{1-t}\ .$$
\end{theorem}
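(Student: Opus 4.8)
The forward implication is immediate: if $C$ is smooth at $p$ then, by the remark following Proposition~\ref{first}, $\zeta_{C,p}(t)=\bL^{-\dim{}_p(C)}(1-t)^{-1}$, and $\dim{}_p(C)=1$ since $C$ is a curve. So the content is the converse, and the plan is to isolate the coefficient of $t$ in $\zeta_{C,p}(t)$ and show that it equals $\bL^{-1}$ precisely when $p$ is a smooth point of $C$.

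First I would reduce to a local statement. Since $J_p^nC=\Spec(\sO_{C,p}/\sM_p^n)$, and hence $\sA_n(C,p)=\nabla_{J_p^nC}J_p^nC$, depend only on the local ring $\sO_{C,p}$, so does $\zeta_{C,p}(t)$; as $\mathrm{Sing}(C)$ is finite I may shrink $C$ to an affine open neighbourhood of $p$ containing no other singular point. Assuming toward a contradiction that $p$ is singular on $C$, it is then the unique singular point, so the decomposition identity of \S2 and Lemma~\ref{singlemma} apply. Plugging Lemma~\ref{singlemma} — which, using $\dim{}_p(C)=1$ and $\kappa(p)=k$, reads $[\nabla_{J_p^nC}C]=([C]-1)\bL^{\ell(J_p^nC)-1}+[\sA_n(C,p)]$ — into the decomposition identity together with the definition of $\Theta_{C,J_p^{\infty}C}(t)=\sum_{n\geq1}[\nabla_{J_p^nC}C]\bL^{-\ell(J_p^nC)}t^{n-1}$, the contributions of $[C]-1$ cancel and one is left with
$$\zeta_{C,p}(t)=\sum_{n\geq1}[\sA_n(C,p)]\,\bL^{-\ell(J_p^nC)}\,t^{n-1}.$$
In particular the coefficient of $t$ equals $[\sA_2(C,p)]\,\bL^{-\ell(J_p^2C)}$, an expression depending only on $\sO_{C,p}/\sM_p^2$.

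Next I would use the rigidity of the $2$-jet of a plane curve singularity. As $C$ lies on a smooth surface $S$, the ring $\sO_{S,p}$ is regular local of dimension $2$ and $C$ is locally cut out at $p$ by a single $f\in\fm_{S,p}$; that $p$ is singular on $C$ means exactly $f\in\fm_{S,p}^2$. Hence $\sO_{C,p}/\sM_p^2\cong\sO_{S,p}/\fm_{S,p}^2\cong k[x,y]/(x,y)^2$ with no dependence on which singularity $p$ is, so $\ell(J_p^2C)=3$ and $\sA_2(C,p)=\nabla_{J_p^2C}J_p^2C$ is one fixed scheme for every plane curve singularity (for instance the cuspidal cubic of the Example in \S2). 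A direct computation identifies it: a $k$-algebra homomorphism $k[x,y]/(x,y)^2\to A[x,y]/(x,y)^2$ is the data of $x\mapsto a_0+a_1x+a_2y$ and $y\mapsto b_0+b_1x+b_2y$ with $x^2,xy,y^2$ sent to $0$, i.e.\ with $a_0^2=a_0a_1=a_0a_2=0$, $b_0^2=b_0b_1=b_0b_2=0$, and $a_0b_0=a_0b_1+a_1b_0=a_0b_2+a_2b_0=0$; the radical of this ideal is $(a_0,b_0)$, so $\sA_2(C,p)^{\red}\cong\bA^4_k$ and $[\sA_2(C,p)]=\bL^4$. Thus the coefficient of $t$ in $\zeta_{C,p}(t)$ is $\bL^{4}\bL^{-3}=\bL$. (The same computation applied to $k[t]/(t^2)$ gives $\sA_2(C,p)^{\red}\cong\bA^1_k$ and coefficient $\bL^{1}\bL^{-2}=\bL^{-1}$ when $p$ is smooth, consistent with $\bL^{-1}(1-t)^{-1}$ and with the explicit formula $\zeta_{C,O}(t)=\bL^{-1}+\bL t+\cdots$ for the cuspidal cubic.) Since $\bL\neq\bL^{-1}$ in $\grot{\Var{k}}[\bL^{-1}]$ — equivalently $\bL^{N+2}\neq\bL^{N}$ in $\grot{\Var{k}}$, as any cohomological realization detects — the hypothesis $\zeta_{C,p}(t)=\bL^{-1}(1-t)^{-1}$ cannot hold when $p$ is singular, which completes the converse.

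The argument is short, and its only delicate point is the normalization that makes the coefficient of $t$ in $\zeta_{C,p}(t)$ equal to $[\sA_2(C,p)]\bL^{-\ell(J_p^2C)}=[\sA_2(C,p)]\bL^{-3}$ rather than some other twist of $[\sA_2(C,p)]$; this is forced by the decomposition identity of \S2 together with Lemma~\ref{singlemma}, and it is cross-checked against the Example, where indeed $\zeta_{C,O}(t)=\bL^{-1}+\bL t+\cdots$. Granting the bookkeeping, the elementary identification $\sA_2(C,p)^{\red}\cong\bA^4_k$ and the standard fact $\bL\neq\bL^{-1}$ finish the proof.
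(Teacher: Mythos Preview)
Your proof is correct but follows a genuinely different route from the paper's.  The paper argues the converse via the rationality results of \S3: it invokes Corollary~\ref{stronglyrational} (or its multibranched analogue) to relate $\zeta_{C,p}(t)$ to the zeta function $Z_r^a(t)$, and then appeals to Loeser's proof of the motivic monodromy conjecture for plane curves to conclude that $(1-t)Z_r^a(t)$ must have a pole at some $t=\bL^{-v}$, so $\zeta_{C,p}(t)$ cannot equal $\bL^{-1}(1-t)^{-1}$.  You instead isolate a single coefficient: at any singular point of a plane curve one has $J_p^2C\cong\Spec k[x,y]/(x,y)^2$ regardless of the singularity type, and the direct computation $[\sA_2(C,p)]=\bL^4$, $\ell(J_p^2C)=3$ gives coefficient $\bL\neq\bL^{-1}$.

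Your argument is considerably more elementary and self-contained: it needs neither the rationality machinery of \S3 nor Loeser's theorem, and the only nontrivial input is that $\bL^2\neq 1$ after inverting $\bL$, which any standard realization detects.  It would also work over any algebraically closed field, not just characteristic zero.  The paper's route, on the other hand, situates the statement within the broader picture of poles of $\zeta_{C,p}$ versus poles of the classical motivic Igusa zeta function, which is more conceptual and connects to the themes of the paper, but is a much heavier hammer for this particular nail.
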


\begin{proof} The forward direction is an immediate corollary to Proposition \ref{first}.
The other direction is now (due to the rationality results in this section) also immediate. Indeed, in the case that $C$ is singular at $p$, it is impossible that $$\zeta_{C,p}(t)= \bL^{-1}\frac{1}{1-t}$$ for the mere reason that $(1-t)Z_{S_a}^{r,e}(t)$ will have a pole (in $t=\bL^{-q}$) by Lemma \ref{vimp}, Corollary \ref{ars}, and the Motivic Monodromy Conjecture, which was proved by Loeser (cf. \cite{L}) in the case of plane curves. Here, $q$ denotes a formal variable over $\bQ$ and $S_a:=\sA(C,p)$.
\end{proof}

\bibliographystyle{amsalpha}
\nocite{*}
\bibliography{auto2}

\end{document}